\documentclass[11pt]{article}
\PassOptionsToPackage{obeyspaces}{url}
\usepackage{amsfonts, amsmath, amssymb}
\usepackage[colorlinks=true,citecolor=blue,urlcolor=blue,linkcolor=blue,bookmarksopen=true,hidelinks]{hyperref}
% Load amsmath after hyperref, when using cleveref; see https://tex.stackexchange.com/questions/233039/disabling-destination-with-the-same-identifier-with-package-silence
\usepackage{amsthm}

\usepackage{breakurl}
\usepackage{tikz}
\usetikzlibrary{decorations.pathmorphing,calc}

% The following hack from http://tex.stackexchange.com/a/103349 reduces spacing in the bibliography slightly.
% The effect is that the paper is not so long
\usepackage{etoolbox}
\patchcmd{\thebibliography}{\leftmargin\labelwidth}{\leftmargin\labelwidth\addtolength\itemsep{-0.1\baselineskip}}{}{}

% We use it for \DeclarePairedDelimeters
\usepackage{mathtools}

% For custom enumerate environment
\usepackage{enumitem}

\oddsidemargin  0pt
\evensidemargin 0pt
\marginparwidth 40pt
\marginparsep 10pt
\topmargin -20pt
\headsep 10pt
\textheight 8.7in
\textwidth 6.65in

\newcommand*\samethanks[1][\value{footnote}]{\footnotemark[#1]}

\author{Boris Bukh\thanks{Department of Mathematical Sciences, Carnegie Mellon University, Pittsburgh, PA 15213, USA\@. Supported in part by U.S.\ taxpayers through NSF CAREER grant DMS-1555149. Email: {\tt bbukh@math.cmu.edu}, {\tt tchao2@andrew.cmu.edu}}
\and
Ting-Wei Chao\samethanks
\and
Ron Holzman\thanks{Department of Mathematics, Technion-Israel Institute of Technology, 3200003 Haifa, Israel. Work done during a visit at the Department of Mathematics, Princeton University, supported by the H2020-MSCA-RISE project CoSP--GA No.\ 823748. Email: {\tt holzman@technion.ac.il}}}

\title{On convex holes in $d$-dimensional point sets}
\date{}

\usepackage[nameinlink]{cleveref}

\newtheorem{theorem}{Theorem}
\newtheorem{lemma}[theorem]{Lemma}

\newtheorem{proposition}[theorem]{Proposition}
\theoremstyle{definition}
\newtheorem{definition}[theorem]{Definition}

\newcommand*{\eqdef}{\stackrel{\mbox{\normalfont\tiny def}}{=}}   % definition by equality
\newcommand*{\veps}{\varepsilon}                                  % Nice-looking epsilon
\DeclarePairedDelimiter\abs{\lvert}{\rvert}                       % Absolute values, cardinality
\newcommand*{\R}{\mathbb{R}}                                      % Real numbers
\newcommand*{\N}{\mathbb{N}}                                      % Natural numbers
\newcommand*{\prf}{\preceq}                                       % Prefix of
\newcommand*{\convo}{\conv^o}                                     % Interior of convex hull
\newcommand*{\ba}{\mathbf{a}}
\newcommand*{\bb}{\mathbf{b}}                                     % We use bold letter to denote vectors
\newcommand*{\bx}{\mathbf{x}}
\newcommand*{\by}{\mathbf{y}}
\newcommand*{\bz}{\mathbf{z}}
\newcommand*{\da}{\hat{a}}                                        % a with last symbol removed
                           % Dyadic rational inside [0,1)
\DeclareMathOperator{\conv}{conv}                                 % Convex hull
                                   % Reversal
\DeclareMathOperator{\len}{len}                                   % Length (defined implicitly)
\DeclareMathOperator{\dist}{dist}                                 % Distance

\begin{document}

\maketitle

\begin{abstract} Given a finite set $A \subseteq \mathbb{R}^d$, points $a_1,a_2,\dotsc,a_{\ell} \in A$ form an $\ell$-hole in $A$ if they are the vertices of a convex polytope which contains no points of $A$ in its interior. We construct arbitrarily large point sets in general position in $\mathbb{R}^d$ having no holes of size $O(4^dd\log d)$ or more. This improves the previously known upper bound of order $d^{d+o(d)}$ due to Valtr. The basic version of our construction uses a certain type of equidistributed point sets, originating from numerical analysis, known as $(t,m,s)$-nets or $(t,s)$-sequences, yielding a bound of $2^{7d}$. The better bound is obtained using a variant of $(t,m,s)$-nets, obeying a relaxed equidistribution condition.
\end{abstract}

\section{Introduction}
A finite set $A \subseteq \mathbb{R}^d$ is in \emph{general position} if any $k$-dimensional affine subspace of $\mathbb{R}^d$, with $k < d$, contains at most $k+1$ points of $A$. Points $a_1,a_2,\dotsc,a_{\ell} \in A$ are in \emph{convex position} if they are the vertices of a convex polytope. If that polytope is empty, i.e., contains no points of $A$ in its interior, the points $a_1,a_2,\dotsc,a_{\ell}$ are said to form an $\ell$-\emph{hole} in $A$.

A classic result of Erd\H{o}s and Szekeres~\cite{ES} asserts that for any positive integer $\ell$, every sufficiently large finite set $A$ in general position in $\mathbb{R}^2$ contains $\ell$ points in convex position. Erd\H{o}s~\cite{E} went on to ask if one can also guarantee an $\ell$-hole in a large enough $A \subseteq \mathbb{R}^2$ in general position. Harborth~\cite{Ha} proved that one can always find a $5$-hole, while Horton~\cite{Ho} constructed arbitrarily large sets without any $7$-hole. The remaining case $\ell=6$ turned out to be more challenging, but was settled in the affirmative by Nicol\'as~\cite{N} and, independently, Gerken~\cite{G}.

Another question studied is the asymptotic behavior, as $n \to \infty$, of the number of $\ell$-holes guaranteed to exist in a set $A$ of $n$ points in general position in $\mathbb{R}^2$. For $\ell=3,4$ this number was shown to be $\Theta(n^2)$ by Katchalski and Meir~\cite{KM} and B\'ar\'any and F\"uredi~\cite{BF}. The order of magnitude for $\ell=5,6$ is not known, but very recently Aichholzer, Balko, Hackl, Kyn\v{c}l, Parada, Scheucher, Valtr and Vogtenhuber~\cite{Aet} proved it is superlinear for $\ell=5$.

Turning to higher dimensions, much less is known. Valtr~\cite{V} gave a simple projection argument to extend the Erd\H{o}s--Szekeres result to any dimension $d \ge 2$: for every $\ell$, any sufficiently large finite set $A$ in general position in $\mathbb{R}^d$ contains $\ell$ points in convex position. Regarding holes, he defined:\[h(d) \eqdef \max\{\ell:\textrm{any large enough }A \subseteq \mathbb{R}^d\textrm{ in general position contains an }\ell\textrm{-hole}\}.\] Using this notation, the $2$-dimensional results recalled above say that $h(2)=6$. Valtr proved the following bounds for $d \ge 3$:\[2d+1 \le h(d) \le 2^{d-1}(P(d-1)+1),\] where $P(d-1)$ is the product of the smallest $d-1$ prime numbers (and thus is asymptotically $d^{d+o(d)}$). For $d=3$ he gave the better upper bound $h(3) \le 22$. These remained the best known bounds on $h(d)$ for almost 30 years. In this note, we improve the upper bound to become exponential in $d$.

\begin{theorem} \label{thm:upper-bound}
For all $d \ge 3$ we have $h(d) < 2^{7d}$.
\end{theorem}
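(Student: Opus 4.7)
The plan is to take $A$ to be a $(t,m,d)$-net in base $2$ inside $[0,1)^d$, perturbed infinitesimally so that $A$ lies in general position. Such a set $A$ has $|A|=2^m$ points and, for every choice of nonnegative integers $k_1,\dotsc,k_d$ with $k_1+\dotsb+k_d=m-t$, every axis-parallel \emph{elementary dyadic box} $\prod_{j=1}^d [a_j 2^{-k_j},(a_j+1) 2^{-k_j})$ (of volume $2^{t-m}$) contains exactly $2^t$ points of $A$. Classical constructions (e.g.\ the digital nets of Niederreiter or Sobol') give such nets with $t=O(d)$ for every $m\ge t$, so fixing a construction with $t\le cd$ for a concrete constant $c$ well inside the $2^{7d}$ budget provides arbitrarily large $A$.

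With $A$ fixed, I would prove $h(d)<2^{7d}$ by contradiction. The net property yields a near-tautology: if the interior of a candidate hole $P\subseteq[0,1)^d$ contains any elementary dyadic box of volume $\ge 2^{t-m}$, that box contains $2^t\ge 1$ points of $A$, and by general position these points lie in the interior of $P$, contradicting that $P$ is empty. So it suffices to prove the geometric statement that a convex polytope $P\subseteq[0,1)^d$ with at least $2^{7d}$ vertices drawn from $A$ must contain in its interior some elementary dyadic box of volume $\ge 2^{t-m}$.

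Proving this geometric claim is the main obstacle. A polytope with many vertices in $A$ need not have large volume, nor need it be \emph{fat} in any axis direction, so naive inscriptions (a ball, a John ellipsoid) do not directly yield an axis-aligned dyadic box. I would therefore attempt a dyadic descent: recursively partition the current cell coordinate-wise into $2^d$ halves, at each stage passing to the sub-cell capturing the largest share of $P$'s vertices (a $2^{-d}$ fraction by pigeonhole) and rescaling. Each step loses only a $2^{O(d)}$ factor in the vertex count while halving the sub-cell's sides; after $O(d)$ descents one should arrive at an elementary dyadic cell inside which the surviving portion of $P$ is forced to contain an open sub-box from the sheer density of its remaining vertices. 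A careful accounting of the geometric and combinatorial losses incurred at each descent step — combined with the bound $t = O(d)$ on the net parameter — is what would produce the exponent $7d$.
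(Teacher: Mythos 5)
Your approach is fundamentally different from the paper's, and it has a genuine gap that I don't think can be patched.

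The core issue is the geometric claim on which the whole argument rests: that a convex polytope $P\subseteq[0,1)^d$ with at least $2^{7d}$ vertices drawn from a $(t,m,d)$-net must contain in its interior an elementary dyadic box of volume $\ge 2^{t-m}$. This is false, and your own descent heuristic makes the failure visible. At each step you pass to a dyadic sub-cell capturing at least a $2^{-d}$ fraction of the remaining vertices, so after $k$ steps you retain at least $2^{7d-kd}$ vertices --- meaning that after about $7$ steps your vertex budget is exhausted. At that point the cell has side $2^{-7}$ and volume $2^{-7d}$, while the elementary dyadic box you need has volume $2^{t-m}$, which is astronomically smaller once $m$ is large (and $m$ must be allowed to grow, since you want arbitrarily large point sets). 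The descent stalls many orders of magnitude above the target scale. Worse, the shape problem you correctly flag is not cured by descending: $P$ can be a very long, thin slab passing diagonally through the cube, and its intersection with any small dyadic cell is again a thin slab that contains no axis-parallel dyadic sub-box at all. Nets in $[0,1)^d$ \emph{do} contain many points in convex position along thin slabs (this is closely related to why low-discrepancy sets and grids have large convex holes), so directly taking $A$ to be a perturbed net cannot work --- it would yield holes whose size grows with $m$.

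This is exactly why the paper does \emph{not} use a net as the point set. The net (or $(T,\veps)$-almost net) is used only as a \emph{combinatorial index set}: its elements are converted to a ``$q$-good'' family $Y\subseteq(\{0,1\}^m)^d$ (\Cref{tmstofcn}), and the actual points are $P(\by)=\sum_{i,j} y^i_j\,t_{i,j}e_i$ with exponentially separated scales $t_{1,m}\ll\dotsb\ll t_{d,1}$. This Horton-style stretching is what defeats the thin-slab problem: at each scale the construction splits the point set into two clusters that are so far apart that any convex hull touching both clusters must engulf a definite interior region near one of them (\Cref{lem:far_apart,lem:geoinduct}), and the net structure guarantees that an interior point of $Y$ is always available (\Cref{def:good}, proved in \Cref{fcntogeo}). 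Your proposal has the right ingredient (a $(t,m,d)$-net with $t=O(d)$) but omits the essential step of re-embedding it via the exponentially spaced map $P(\cdot)$; without that step the conclusion does not follow.
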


In fact, the upper bound that we get, as explained below, is slightly better, with the exponent reduced from $7d$ to less than $7d - 8\sqrt{\frac{d-2}{3}}$. For low values of $d$, we get even better bounds, e.g.,\[h(3) \le 32,\,h(4) \le 240,\,h(5) \le 988,\,h(6) \le 8000,\] which (except for $d=3$) improve upon those of Valtr.

In order to explain the source of our improvement, we recall that, generalizing Horton's original $2$\nobreakdash-dimensional construction, Valtr constructed for any $d \ge 2$ arbitrarily large point sets in $\mathbb{R}^d$, which he called $d$\nobreakdash-Horton sets, containing no hole of size greater than $2^{d-1}(P(d-1)+1)$. The key property that he used, and the one responsible for the superexponential term $P(d-1)$ in the bound, is the following: For two relatively prime moduli $q_1$ and $q_2$ and any two residue classes $r_1(\operatorname{mod} q_1)$ and $r_2(\operatorname{mod} q_2)$, their intersection is equidistributed in the sense that it contains one of any $q_1q_2$ consecutive integers (by the Chinese remainder theorem). We generalize Horton's construction in a different way, using another kind of equidistribution which is ``cheaper'' to achieve. Instead of recruiting larger and larger prime factors as the dimension grows, we use the fixed prime $2$. The relevant notion of equidistribution is captured by the following definition, due to Sobol'~\cite{So}.

\begin{definition}
Let $t \le m$ be nonnegative integers, and let $s$ be a positive integer.

A subset $X \subseteq [0,1)^s$ is a \emph{$(t,m,s)$-net} in base $2$ if $\abs{X} = 2^m$ and every dyadic sub-box $B$ of $[0,1)^s$ of the form
\[B = \prod_{i=1}^s \Bigl[\frac{b_i}{2^{k_i}}, \frac{b_i + 1}{2^{k_i}}\Bigr),\] where $b_i, k_i$ are nonnegative integers, $b_i < 2^{k_i}$, and $\sum_{i=1}^s k_i = m-t$, contains exactly $2^t$ points of $X$.

For a real number $y \in [0,1]$ let $y = \sum_{j=1}^{\infty} \frac{y_j}{2^j}$ with $y_j \in \{0,1\}$ be a binary expansion of $y$, and $[y]_m = \sum_{j=1}^m \frac{y_j}{2^j}$ its length $m$ truncation (which may depend on the choice of expansion). For $x \in [0,1]^s$ we write $[x]_m$ for the point in $[0,1]^s$ obtained by applying this truncation coordinatewise.

An infinite sequence $x_0,x_1,\ldots$ of points in $[0,1)^s$ with prescribed binary expansions of their coordinates is a \emph{$(t,s)$-sequence} in base $2$ if for every nonnegative integer $a$ and every integer $m > t$, the set $X_{a,m} \subseteq [0,1]^s$ given by \[X_{a,m} = \{[x_n]_m:\,a2^m \le n < (a+1)2^m\}\] is a $(t,m,s)$-net in base $2$.
\end{definition}

These notions (and their analogs in bases other than $2$) have been studied intensively in discrepancy theory, with applications to numerical analysis. The goal is, for a given dimension $s$, to construct $(t,s)$\nobreakdash-sequences and hence $(t,m,s)$\nobreakdash-nets with $t$ as small as possible ($t$ is called the quality parameter, with lower values corresponding to stronger uniformity of the net/sequence). It has been observed (see e.g.\ \cite[Lemma~1]{NX}) that the existence of a $(t,s)$-sequence implies the existence of $(t,m,s+1)$-nets for all $m > t$. Various constructions have been proposed, the best among them using global function fields. We will use the following upper bound on the lowest possible value of $t$.

\begin{theorem}[Xing and Niederreiter~\cite{XN}] \label{thm:xn}
For every positive integer $s$ there exists a $(t,s)$\nobreakdash-sequence in base $2$ with $t \le 5s - 8\sqrt{\frac{s-1}{3}} - 3$. Moreover, for infinitely many values of $s$, there exists a $(t,s)$\nobreakdash-sequence in base $2$ with $t < 3s$.
\end{theorem}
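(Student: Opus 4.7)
The plan is to invoke the algebraic function field approach of Niederreiter and Xing, which converts a global function field $F/\mathbb{F}_2$ with many rational places relative to its genus into a $(t,s)$-sequence whose quality parameter $t$ is controlled by $g(F)$. This reduces the analytic problem of building highly equidistributed sequences in $[0,1)^s$ to the algebro-geometric problem of producing function fields over $\mathbb{F}_2$ with genus $g(F)$ as small as possible compared to the number $N(F)$ of $\mathbb{F}_2$-rational places.

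First, I would fix a function field $F/\mathbb{F}_2$ of genus $g$ containing $s+1$ pairwise distinct rational places $P_\infty, P_1, \ldots, P_s$, and choose a local uniformizer $z_i$ at each $P_i$. The $n$-th point of the sequence is then assembled as follows. Using Riemann-Roch on increasing multiples of $P_\infty$, construct once and for all a sequence $(w_j)_{j \ge 0}$ of elements of $F$ forming a basis well-adapted to the local expansions at each $P_i$, meaning that for every $i$ the initial segments $w_0, \ldots, w_{k-1}$ realize prescribed orders of vanishing at $P_i$ in a controlled manner. Given $n = \sum_{j \ge 0} a_j(n) 2^j$, form $\phi_n = \sum_j a_j(n) w_j \in F$ and take the $i$-th coordinate of $x_n$ to be the real number whose binary digits are the coefficients of the local expansion of $\phi_n$ at $P_i$ in the uniformizer $z_i$. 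Verifying that $X_{a,m}$ is a $(t,m,s)$-net then reduces to showing that a certain $\mathbb{F}_2$-linear evaluation map, from the span of an appropriate block of the $w_j$ to a finite-dimensional quotient indexing the cells of a dyadic partition at the given resolution, is an isomorphism. Riemann-Roch makes this automatic once the degree of the relevant divisor exceeds $2g-2$, which yields the bound $t \le g$ in the basic version of the construction.

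To sharpen $t \le g$ to the form $t \le 5s - 8\sqrt{(s-1)/3} - 3$, I would apply the refinement of Xing and Niederreiter in which, rather than dedicating one distinct rational place to each coordinate, one replaces $P_1, \ldots, P_s$ by a more economical divisor supported also in places of small degree, and re-chooses the adapted basis to extract a further saving. The numerical bound then emerges by substituting the parameters of the best known function field towers over $\mathbb{F}_2$ --- descendants of the Garcia--Stichtenoth and Niederreiter--Xing towers --- which produce infinite families achieving $N(F) \ge c\sqrt{g(F)}$ for an absolute constant $c > 0$; optimizing the trade-off between using extra places to lower $t$ and paying for them with extra genus accounts for the $\sqrt{(s-1)/3}$ term. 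The ``moreover'' clause, $t < 3s$ for infinitely many $s$, records the fact that along specific favorable levels of these towers the ratio $g(F)/N(F)$ is small enough to drive $t$ below $3s$ outright.

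The principal obstacle is the algebro-geometric ingredient: exhibiting an infinite sequence of function fields over the very small base field $\mathbb{F}_2$ with $N(F)$ of order $\sqrt{g(F)}$ and with a large enough constant. Over larger fields one can use modular curves or class field towers, but over $\mathbb{F}_2$ the explicit constructions (Garcia--Stichtenoth, Niederreiter--Xing, and their successors) are genuinely delicate, and I would import them as a black box rather than rebuild them. Given such a tower, the translation from its algebraic data to the equidistribution property of the corresponding sequence is a relatively mechanical exercise in Riemann-Roch bookkeeping.
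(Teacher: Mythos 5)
The paper does not prove this theorem; it is imported verbatim from Xing and Niederreiter \cite{XN} (with the small-$s$ tables deferred to \cite{NX}) and used as a black box, so there is no internal proof to compare your sketch against. Your high-level outline of the digital construction --- rational places $P_\infty, P_1,\dotsc,P_s$, local uniformizers, an $L(rP_\infty)$-adapted basis, and Riemann--Roch to certify the net property once the degree clears $2g-2$, yielding $t\le g$ --- is indeed the right shape of the Niederreiter--Xing machinery, and the refinement allowing places of degree $>1$ is also a genuine feature of their later improvements.

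However, the asymptotic ingredient you invoke is stated incorrectly in a way that would break the claimed bound. You write that the function-field towers over $\mathbb{F}_2$ achieve $N(F)\ge c\sqrt{g(F)}$. That inequality is far too weak: it permits $g\sim N^2$, and with $t\le g$ and $s+1\le N$ that would yield $t=O(s^2)$, not the linear-in-$s$ bound in the theorem. What one actually needs, and what the Garcia--Stichtenoth and Niederreiter--Xing (class field tower) constructions deliver over $\mathbb{F}_2$, is a \emph{linear} relation $g(F)\le C\cdot N(F)$, equivalently a positive lower bound on the Ihara quantity $A(2)=\liminf N/g$; that is what makes $t\le g$ linear in $s$. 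The secondary term $-8\sqrt{(s-1)/3}$ is not a consequence of a square-root place-versus-genus trade-off in a generic tower; it records the exact genus and place counts of a specific explicit family in \cite{XN} and has to be read off from their formulas rather than derived from an ``optimize the trade-off'' heuristic. As a blind reconstruction, your answer would need to replace the $\sqrt{g}$ statement with the correct linear Ihara-type bound and cite the explicit family for the lower-order correction; as a review of the paper's proof, the relevant observation is simply that the paper proves nothing here.
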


These upper bounds are not sharp in general. In particular, for low values of $s$, better estimates are known (see~\cite[Table~III]{NX}): e.g., $(t,s)$-sequences in base $2$ with $(t,s) = (0,2),(1,3),(1,4),(2,5),\ldots$ have been constructed (and can be used, as explained below, to get the upper bounds on $h(d)$ in dimensions $d=3,4,5,6$ stated above). However, as $s$ grows, $t$ must grow linearly in $s$. The strongest known lower bound, due to Sch\"urer~\cite{Sc}, is $t > s - (1+o(1))\log_2s$.

Our generalization of Horton's construction to higher dimensions uses $(t,m,s)$-nets and is summarized in the following proposition, proved in the next section.

\begin{proposition} \label{prop:net-to-set-basic}
Let $d \ge 2$ and let $t \le m$ be nonnegative integers so that a $(t,m,d)$-net in base $2$ exists. Then there exists a set $A$ of $2^m$ points in general position in $\mathbb{R}^d$, having no holes of size greater than $2^d(2^{t+d-1}-2^t+1)$.
\end{proposition}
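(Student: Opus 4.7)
The plan is to generalize Horton's recursive 2D construction to arbitrary dimension, with the $(t,m,d)$-net guiding the placement of points and its equidistribution property used to bound hole sizes.

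\textbf{Step 1 (Construction).} Given the net $X = \{p_1,\ldots,p_{2^m}\} \subseteq [0,1)^d$, I define $A \subseteq \R^d$ as the image of $X$ under a coordinate-wise lifting $\Phi(y_1,\ldots,y_d) = (\phi_1(y_1),\ldots,\phi_d(y_d))$, where each $\phi_j$ is strictly convex with very rapidly growing high-order derivatives. The growth rates are chosen so that $\phi_1$ dominates $\phi_2$ at every dyadic scale relevant to $X$, $\phi_2$ dominates $\phi_3$, and so on. A generic infinitesimal perturbation then places $A$ in general position without altering the combinatorial structure I will exploit.

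\textbf{Step 2 (Rigidity of the convex hull).} The steep, scale-separated lifting enforces a structural lemma: for any subset $S \subseteq A$, each vertex of $\conv(S)$ is the $\Phi$-image of a point $p \in X$ which is extremal in some coordinate $j$ over a dyadic sub-box of $[0,1)^d$ determined by the other points of $S$. Thus the vertex set of $\conv(S)$ partitions into at most $2^d$ directional classes (one per choice of coordinate and upper/lower envelope), and within each class the preimages trace a monotone chain of nested dyadic sub-boxes. This mirrors Horton's 2D analysis, where consecutive vertices on the convex hull differ by a single bit in the index, the extra coordinates being handled inductively on $d$.

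\textbf{Step 3 (Hole count).} Let $H = \Phi(\{p_{i_1},\ldots,p_{i_\ell}\})$ be an $\ell$-hole. The structural lemma partitions the vertices into at most $2^d$ directional classes, and within each class the vertices lie along a chain of dyadic sub-boxes. By the $(t,m,d)$-net property, every dyadic sub-box whose side lengths have exponents summing to $m-t$ contains exactly $2^t$ points of $X$; and since $\conv(H)$ has empty interior with respect to $X \setminus H$, the chain within each class can extend through at most $2^{d-1}-1$ coarser dyadic refinements, each contributing at most $2^t$ vertices, plus a single ``starting'' vertex. This bounds each class by $2^{t+d-1} - 2^t + 1$, and summing over the $2^d$ classes gives $\ell \le 2^d(2^{t+d-1} - 2^t + 1)$, as required.

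\textbf{Main obstacle.} I expect the hardest part to be Step 2: translating the $\R^d$-geometric condition of being an extreme point of $\conv(S)$ into the clean combinatorial statement that $p$ is extremal in some coordinate over a dyadic sub-box. Making this quantitative requires a careful choice of the $\phi_j$'s so the scale hierarchy survives all $m$ dyadic levels, and then an induction on $d$ that reduces extremality in coordinate $j$ to a $(d-1)$-dimensional extremality problem inside a dyadic half-box in the remaining coordinates.
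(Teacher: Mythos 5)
Your Step~1 construction is not the paper's, and I believe it is incorrect as stated: a \emph{convex} coordinate-wise lift cannot produce the Horton-like structure that the argument needs. What the paper does is fundamentally different. It does not lift the net points $x \in [0,1)^d$ directly; it first extracts the length-$m$ binary digit strings $y^i = (y^i_1,\dotsc,y^i_m)$ of each coordinate $x_i$, and then embeds the digit strings via the linear map $P(\by) = \sum_{i,j} y^i_j\, t_{i,j}\, e_i$ where the scales satisfy $t_{i,1} \gg t_{i,2} \gg \dotsb \gg t_{i,m}$ and $t_{i,\cdot} \gg t_{i-1,\cdot}$. Viewed coordinate-wise, the effective map is roughly $x_i \mapsto \sum_j (\text{bit}_j\, x_i)\, t_{i,j}$. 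This function is strictly increasing but \emph{strongly non-convex}: its increment across a dyadic point at level $k$ is roughly $t_{i,k}$, and these increments are wildly non-monotone along the domain (the level-$1$ jump at $1/2$ dwarfs the level-$2$ jumps at $1/4$ and $3/4$, etc.). That non-monotone, scale-separated jump pattern is precisely the recursive ``far above / far below'' structure that Horton-type arguments require. A strictly convex $\phi_j$ produces the opposite behavior---increments that grow monotonically along the domain---which if anything pushes the lifted set \emph{toward} convex position and would tend to make holes larger, not smaller. Your structural lemma in Step~2 (``each vertex of $\conv(S)$ is extremal in some coordinate over a dyadic sub-box'') is exactly the kind of claim that Horton scale separation gives you and convexity does not; as written, there is no mechanism to establish it.

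Step~3 is also a gap even granting the construction: the counting is reverse-engineered rather than derived. You assert that the chain in each directional class passes through ``at most $2^{d-1}-1$ coarser dyadic refinements, each contributing at most $2^t$ vertices,'' but give no reason for either figure. The paper's proof of \Cref{fcntogeo} proceeds very differently: it builds a nested chain $U_d \supseteq U_{d-1} \supseteq \dotsb \supseteq U_1$ by examining one coordinate at a time and keeping the more populated half at each step (that is where the factor $2^{d-1}$ comes from, as $d-1$ halvings), applies the net's equidistribution only at the innermost level through a purely combinatorial ``$q$-goodness'' criterion (\Cref{def:good}, \Cref{tmstofcn}), and finishes with a $d$-step geometric induction (\Cref{lem:far_apart}, \Cref{lem:geoinduct}) exploiting the hierarchy $t_{d,\cdot} \gg t_{d-1,\cdot} \gg \dotsb$. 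Your high-level plan (Horton-like construction guided by the net, then count vertices in a bounded number of classes) is in the right spirit, but the devices chosen to realize it do not work.
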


Together with Theorem~\ref{thm:xn}, and the fact that a $(t,s)$-sequence entails $(t,m,s+1)$-nets for all $m > t$, this implies the upper bound on $h(d)$ stated in Theorem~\ref{thm:upper-bound} (with something to spare). The second part of Theorem~\ref{thm:xn} shows that for infinitely many values of $d$, we get an upper bound on $h(d)$ which is exponentially better than stated in Theorem~\ref{thm:upper-bound}. The specific upper bounds on $h(d)$ for low values of $d$ stated above follow by plugging in the parameters of the corresponding known constructions of $(t,s)$\nobreakdash-sequences.

\paragraph{Improvement.} After the original version of the paper was written, we noticed that we may replace
$(t,m,d)$-nets by sets satisfying a weaker condition. For $0 \le \veps < 1$, a non-empty set $X\subseteq [0,1)^d$ is a \emph{$(T,\veps)$-almost net} in base $2$ if
$\abs{X}=2^nT$ for some natural number $n$ and
\[
  (1-\veps)T\leq \abs{X\cap B}\leq (1+\veps)T
\]
for every dyadic box $B$ of volume $2^{-n}$. The following is a generalization of \Cref{prop:net-to-set-basic}.
\begin{proposition} \label{prop:net-to-set}
Let $d \ge 2$ and suppose there exists a $(T,\veps)$-almost net in base $2$ in $[0,1)^d$ of size $2^nT$. Then there exists a set $A$ of $2^nT$ points in general position in $\mathbb{R}^d$, having no holes of size greater than $2^d(2^{d-1}(1+\veps)T-(1-\veps)T+1)$.
\end{proposition}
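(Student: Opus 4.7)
The plan is a direct upgrade of the proof of \Cref{prop:net-to-set-basic}. Rewriting the bound of \Cref{prop:net-to-set-basic} as $2^d\bigl(2^{d-1}\cdot 2^t - 2^t + 1\bigr)$ and comparing with the bound of \Cref{prop:net-to-set}, $2^d\bigl(2^{d-1}(1+\veps)T - (1-\veps)T + 1\bigr)$, one sees that the two expressions agree under the substitution $2^t \mapsto (1+\veps)T$ in the upper-count term and $2^t \mapsto (1-\veps)T$ in the lower-count term. This strongly suggests using the same geometric construction as in \Cref{prop:net-to-set-basic} --- a Horton-style lifting $\phi : [0,1)^d \to \R^d$ of the almost-net $X$ to a general-position point set $A$ of size $2^nT$ --- and then rerunning the hole analysis with inequality bounds in place of equalities. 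General position of $A$ should follow as it did in \Cref{prop:net-to-set-basic}, possibly after a small generic perturbation of the lifting parameters, since the lifting is agnostic to the internal structure of $X$.

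\textbf{Counting and main obstacle.} In \Cref{prop:net-to-set-basic}, the hole analysis presumably localizes any putative hole $H$ in $A$ to a cluster of $2^d$ dyadic boxes of common volume $2^{-n}$ in $[0,1)^d$, of which at most $2^{d-1}$ can contribute vertices of $H$ (the upper-counting step) while at least one must be enclosed by $\conv H$ (the lower-counting step, forcing points of $A$ into the interior of $H$ and thus contradicting the hole property unless $\abs{H}$ is small). Replacing the equality $\abs{X \cap B} = 2^t$ by $\abs{X \cap B} \le (1+\veps)T$ in the upper-counting step and by $\abs{X \cap B} \ge (1-\veps)T$ in the lower-counting step, and then summing contributions across clusters (the outer factor of $2^d$), yields exactly the bound claimed in \Cref{prop:net-to-set}. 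The one non-routine point to verify is that the original proof relies on the net property \emph{only} through cardinalities of boxes at the single resolution $2^{-n}$, and never through finer properties such as the equal splitting of such a box into dyadic halves --- a property that $(T,\veps)$-almost nets need not enjoy. If the original argument respects this restriction (as the exact algebraic match of the two bounds strongly suggests it does), then the upgrade to almost-nets is a one-line substitution; otherwise the counting step must be massaged to avoid any appeal to subdivision, which is the main anticipated difficulty.
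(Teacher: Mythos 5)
Your strategy is essentially the one the paper uses: the argument for the basic version factors through a combinatorial ``goodness'' condition on a derived set of binary sequences (\Cref{def:good}, \Cref{fcntogeo}, \Cref{tmstofcn}), and the only use of the net property is via cardinality bounds for dyadic boxes of volume at least $2^{-n}$; these relax from exact counts $2^t$ to the two-sided bounds $(1\pm\veps)T$ exactly as you predict, so the generalization is indeed a substitution of inequalities for equalities (in fact the paper proves \Cref{prop:net-to-set} directly and recovers \Cref{prop:net-to-set-basic} as the case $\veps=0$). One clarification on your hedged geometric gloss: the bound $2^{d-1}q$ does \emph{not} arise from localizing a putative hole to a cluster of $2^d$ dyadic boxes of which $2^{d-1}$ contribute vertices. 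Rather, the factor $2^{d-1}$ comes from the nested halving $U_d\supseteq\dotsb\supseteq U_1$ with $\abs{U_{i-1}}\geq\abs{U_i}/2$ in the proof of \Cref{fcntogeo}, the factor $2^{d-1}(1+\veps)T$ in the upper count comes from the sets $\hat B(a)$ (points sharing shortened prefixes $\da^2,\dotsc,\da^d$) in the proof of \Cref{tmstofcn}, and the residual $2$ in $q$ comes from a two-box pigeonhole there; your instinct that only box cardinalities at the single finest scale $2^{-n}$ (and unions thereof) are used is what makes the substitution sound.
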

In \cite{almostnets}, we construct for every natural number $n$ a $(T,1/3)$-almost net in base $2$ in $[0,1)^d$ of size $2^nT$, where $T\leq 900d\log (2d)$. This implies the following improvement:
\begin{theorem}\label{thm:better}
For all sufficiently large $d$ we have $h(d)=O(4^dd\log d)$.
\end{theorem}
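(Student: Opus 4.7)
The plan is to derive this simply by combining \Cref{prop:net-to-set} with the almost-net construction from \cite{almostnets} that is referenced just before the statement, so there is no new combinatorial or geometric work required beyond a routine arithmetic check.

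First I would fix $d$ large and invoke the construction from \cite{almostnets}: for every natural number $n$, there exists a $(T,1/3)$-almost net in base $2$ in $[0,1)^d$ of size $2^nT$, where $T\le 900d\log(2d)$. By choosing $n$ arbitrarily large, the size $2^nT$ of this almost net can be made as large as desired, which is what will eventually give arbitrarily large witnessing point sets in $\mathbb{R}^d$.

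Next I would feed this almost net into \Cref{prop:net-to-set} with $\veps = 1/3$. That proposition produces a set $A$ of $2^nT$ points in general position in $\mathbb{R}^d$ whose largest hole has size at most
\[
  2^d\bigl(2^{d-1}(1+\veps)T-(1-\veps)T+1\bigr) \;=\; 2^d\Bigl(\tfrac{2^{d+1}}{3}T - \tfrac{2}{3}T + 1\Bigr).
\]
A direct estimate shows the right-hand side is $O(4^dT)$ as $d\to\infty$, and substituting the bound $T\le 900d\log(2d)$ yields $O(4^d d\log d)$.

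Since $n$ was arbitrary, we obtain, for all sufficiently large $d$, arbitrarily large finite point sets in general position in $\mathbb{R}^d$ whose holes all have size $O(4^d d\log d)$. By the definition of $h(d)$, this establishes $h(d)=O(4^d d\log d)$. The only step that is not completely mechanical is verifying the arithmetic bound on the hole size above, and even that is immediate; the substantive content lies in \Cref{prop:net-to-set} and in the almost-net construction of \cite{almostnets}, both of which we use as black boxes here.
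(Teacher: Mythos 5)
Your proposal is correct and takes essentially the same route as the paper: plug the $(T,1/3)$-almost net with $T\le 900d\log(2d)$ from \cite{almostnets} into \Cref{prop:net-to-set} with $\veps=1/3$, and observe that the resulting hole-size bound $2^d\bigl(\tfrac{2^{d+1}}{3}T-\tfrac{2}{3}T+1\bigr)$ is $O(4^dT)=O(4^d d\log d)$. The arithmetic checks out, and since $n$ is arbitrary the witnessing point sets can be taken arbitrarily large, which is exactly what the definition of $h(d)$ requires.
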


\section{Horton-like constructions}
\paragraph{Geometric idea.}
Our construction uses the same basic idea that is used in Horton's construction, and in Valtr's construction.
Namely, if $U\subset \R^d$ is finite, $v\in \R^d$ is arbitrary, and $e\in \R^d$ is a non-zero vector, then from the point of view of $U$, for large values of $t\in \R_+$
the convex hull $\conv (U\cup \{v+te\})$ is almost equal to $\conv(U)+e\R_+$, the Minkowski sum of the set $\conv(U)$ and the ray $e\R_+$. The set $\conv(U)+e\R_+$ has two advantages:
it is independent of $v$ and it is geometrically simpler than $\conv(U\cup \{v+te\})$. We extract the desirable properties into a lemma.

For $U\subset \R^d$, we denote by $\conv U$ its convex hull, by $U^o$ its interior, and by $\convo U$ the interior of its convex hull. For $e \in \R^d$, we write $U + e$ for the translate of the set $U$ by the vector $e$, and $U - e$ is defined similarly. Given a non-zero vector $e\in \R^d$, we denote by $\overline{p}_e$ the projection of the point $p \in \R^d$ on the subspace orthogonal to $e$. We drop the subscript $e$ when it is clear from the context, and use the similar notation $\overline{U}$ for the projection of the set $U$.
\begin{lemma}\label{lem:far_apart}
Suppose $U,V\subset \R^d$ are finite, and $e\in \R^d$ is a non-zero vector. Then there exists a large $t^*=t^*(U,V)$ with the following property.
For all $U'\subseteq U$, $V'\subseteq V$, with $V'\neq \emptyset$, for any point $u \in U$, and every $t\geq t^*$ we have:
\begin{enumerate}[label=(\alph*), ref=(\alph*)]
\item \label{part:closei} if $u\in
(\conv(U')+e\R_+)^o$ then $u \in\convo \bigl(U'\cup (V'+te)\bigr)$, and
%\marginpar{Part \ref{part:closeii} is not used.}
%\item \label{part:closeii} a point $u\in U$ is in $\conv U'+e\R_+$ if $u$ is in $\conv \bigl(U'\cup (V'+te)\bigr)$.
\item \label{part:closeiii} if $\overline{u}\in \convo \overline{U'\cup V'}$
then $u\in \bigl(\conv \bigl(U'\cup (V'+te)\bigr)-e\R_+\bigr)^o$.
\end{enumerate}
\end{lemma}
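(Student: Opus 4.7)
My approach is to realize each required interior point as a \emph{strict} convex combination of $U'\cup(V'+te)$, which, for $t$ larger than a suitable threshold, places it in the (relative) interior of $\conv(U'\cup(V'+te))$. Since $U$ and $V$ are finite, the triples $(U',V',u)$ range over a finite set, so $t^*$ may be taken as the maximum of the resulting thresholds.

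The starting observation for both parts is that $\overline{U'\cup V'}$ affinely spans the $(d-1)$-dimensional subspace orthogonal to $e$: in part~\ref{part:closei}, because $u\in(\conv(U')+e\R_+)^o$ forces $\overline u$ to be interior to $\conv\overline{U'}$; in part~\ref{part:closeiii}, by assumption. In part~\ref{part:closei}, a short linear-algebra computation then shows that $U'\cup(V'+te)$ itself affinely spans $\R^d$ for all sufficiently large $t$, with the missing direction $e$ supplied by any element of $V'+te$.

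For part~\ref{part:closeiii}, write $\overline u=\sum_{w\in U'\cup V'}\gamma_w\overline w$ as a strict convex combination with all $\gamma_w>0$, and lift it to
\[
z_t = \sum_{u'\in U'}\gamma_{u'}u' + \sum_{v'\in V'}\gamma_{v'}(v'+te).
\]
By construction $\overline{z_t}=\overline u$, so $z_t=u+\tau_t e$ with $\tau_t$ growing linearly in $t$ and hence positive for $t$ large. Being a strict combination, $z_t$ lies in the relative interior of $\conv(U'\cup(V'+te))$; a routine perturbation argument then shows that for every small $\delta\in\R^d$ one can pick $s_\delta>0$ close to $\tau_t$ with $u+\delta+s_\delta e$ still in that relative interior, so that $u+\delta\in\conv(U'\cup(V'+te))-e\R_+$. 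This gives $u\in\bigl(\conv(U'\cup(V'+te))-e\R_+\bigr)^o$.

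For part~\ref{part:closei}, the same scheme applies once we have a decomposition $u=\tilde u+\tau e$ with $\tilde u$ in the relative interior of $\conv(U')$ and $\tau\ge 0$. I would obtain it from a strict combination $\tilde u=\sum\alpha_{u'}u'$, using a short case analysis on whether $\conv(U')$ is $(d-1)$- or $d$-dimensional, together with the interior hypothesis, to arrange that the $e$-component of $\tilde u$ does not exceed that of $u$. Setting $\mu=\tau/t$ and $p_t=(\tilde u-\mu v_{\mathrm{avg}})/(1-\mu)$, with $v_{\mathrm{avg}}$ the centroid of $V'$, a direct calculation yields
\[
u = (1-\mu)p_t + \sum_{v'\in V'}\tfrac{\mu}{|V'|}(v'+te).
\]
For $t$ large, $p_t\to\tilde u$ and so remains in the relative interior of $\conv(U')$, hence admits its own strict combination of $U'$; substituting back exhibits $u$ as a strict combination of $U'\cup(V'+te)$. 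Combined with the full affine span from the second paragraph, this places $u$ in $\convo(U'\cup(V'+te))$. The main technical obstacle, I expect, is arranging the decomposition $u=\tilde u+\tau e$ in part~\ref{part:closei} with strict interiority of $\tilde u$; once it is in hand, the rest is routine.
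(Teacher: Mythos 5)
Your treatment of part~(b) is essentially the paper's own argument: write $\overline{u}$ as a strict convex combination of $\overline{U'\cup V'}$, lift, note the lift lies above $u$ in direction $e$ for $t$ large, and clean up with a small-neighborhood/perturbation step. That part is fine.

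Part~(a) is where you diverge from the paper, and there is a genuine gap. You define $p_t=(\tilde u-\mu v_{\mathrm{avg}})/(1-\mu)$ and argue that since $p_t\to\tilde u$ and $\tilde u$ lies in the \emph{relative} interior of $\conv(U')$, the point $p_t$ must also lie there for $t$ large, and hence admits a strict combination of $U'$. But $p_t$ lies on the line through $\tilde u$ and $v_{\mathrm{avg}}$, which in general is \emph{not} contained in $\operatorname{aff}(U')$. When $\conv(U')$ is $(d-1)$-dimensional (a case the lemma must handle, e.g.\ $U'$ a simplex in a hyperplane transverse to $e$, with $u$ strictly above it, so $\tau>0$), the point $p_t$ is shifted off $\operatorname{aff}(U')$ by $\mu(\tilde u-v_{\mathrm{avg}})/(1-\mu)$, which has a nonzero component normal to $\operatorname{aff}(U')$ whenever $v_{\mathrm{avg}}\notin\operatorname{aff}(U')$. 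Thus $p_t\notin\conv(U')$, the substitution step fails, and $u$ is not exhibited as a convex combination of $U'\cup(V'+te)$. Convergence $p_t\to\tilde u$ in $\R^d$ does not imply membership in the relative interior of a lower-dimensional set; you need $p_t$ to land in $\operatorname{aff}(U')$, which your construction does not ensure. (In your own case split this is precisely the $(d-1)$-dimensional branch, which you flagged as the "main technical obstacle" but did not resolve.) The paper avoids this entirely by an argument by contradiction: assume $u\notin\convo(U'\cup\{v+te\})$, take a separating hyperplane with unit normal $w$, move to $\widetilde u=u+\veps w\in\conv(U')+e\R_+$, write $\widetilde u=u_0+t_0e$ with $u_0\in\conv(U')$, and show the genuine convex combination $u'=(t_0/t)(v+te)+(1-t_0/t)u_0\in\conv(U'\cup\{v+te\})$ lies within $\veps$ of $\widetilde u$ for $t$ large, hence on the forbidden side of the hyperplane — a contradiction. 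There the auxiliary point $p$ is never required to lie in $\conv(U')$, which is exactly what sidesteps the dimensionality issue your approach runs into.
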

Part \ref{part:closei} of the lemma is illustrated in the figure below.
As the lemma is intuitively plausible, we defer its proof to the end of this section.
\begin{center}
\begin{tikzpicture}
\def\botclip{-0.3}   % y-coordinate of bottom of the wavy tear
\def\topclip{1.8}    % y-coordinary of top of the wavy tear
\def\seglen{0.79cm}  % the wavelength of the wavy tear (needs to be tuned manual to be a divisor of \topclip-\botclip
\def\waveleft{1.84}  % x-coordinate of the left part of the tear  (v2)
\def\waveright{2.7}  % x-coordinate of the right part of the tear (v2)

\coordinate (U1) at (0,0);
\coordinate (U2) at (-1,0.6);
\coordinate (U3) at (0.2,1);
\coordinate (V1) at (5,1.5);
\coordinate (V2) at (4.8,0.8);
\begin{scope}
\clip (-2,\botclip) -- (-2,\topclip) -- (\waveleft,\topclip) decorate[decoration={snake,segment length=\seglen}] { -- (\waveleft,\botclip) } -- cycle ;
%% Original version
%\coordinate (U1r) at (1,0);
%\coordinate (U3r) at (1,1);
%\draw[fill=lightgray] (U3) -- (U3r) decorate[decoration={snake,segment length=0.83cm}] { -- (U1r)} -- (U1) -- (U2) -- cycle;
%% Straight version
\coordinate (U1r) at (4,0);
\coordinate (U3r) at (4,1);
%% New version
%\coordinate (U1r) at ($(V2)+(\sepdist,0)$);
%\coordinate (U3r) at ($(V1)+(\sepdist,0)$);
\draw[fill=lightgray] (U3) -- (U3r) -- (U1r) -- (U1) -- (U2) -- cycle;
\draw[very thin] (U1) -- (U3);
\fill (U1) circle (2pt);
\fill (U2) circle (2pt);
\fill (U3) circle (2pt);
\draw[fill=white] (-0.6,1.2) circle (2pt);
\draw[fill=white] (-0.2,1.6) circle (2pt);
\draw[fill=white] (0.4,0.5) circle (2pt);
\draw[fill=white] (-0.7,0.12) circle (2pt);
\node at (0.4,0.3) {$\scriptstyle u$};
\end{scope}
\draw[thick] (\waveleft,\topclip) decorate[decoration={snake,segment length=\seglen}] { -- (\waveleft,\botclip) };
\begin{scope}
\clip (\waveright,\topclip) decorate[decoration={snake,segment length=\seglen}] {-- (\waveright,\botclip)} -- (6,\botclip) -- (6,\topclip) -- cycle;
%% Original version
%\coordinate (V1l) at (4,1.5);
%\coordinate (V2l) at (4,0.8);
%\draw[fill=lightgray] (V1) -- (V2) -- (V2l) decorate[decoration={snake,segment length=0.83cm}] { -- (V1l)} -- cycle ;
%% Straight version
\coordinate (V1l) at (1,1.5);
\coordinate (V2l) at (1,0.8);
%% New version
%\coordinate (V1l) at ($(U3)+(-\sepdist,0)$);
%\coordinate (V2l) at ($(U1)+(-\sepdist,0)$);
\draw[fill=lightgray] (V1) -- (V2) -- (V2l) -- (V1l) -- cycle ;
\fill (V1) circle (2pt);
\fill (V2) circle (2pt);
\draw[fill=white] (5.2,0.5) circle (2pt);
\draw[fill=white] (4.7,1.7) circle (2pt);
\draw[fill=white] (4.1,0.3) circle (2pt);
\draw[fill=white] (4.6,1) circle (2pt);
\draw[fill=white] (4.3,1.3) circle (2pt);
\end{scope}
\draw[thick] (\waveright,\topclip) decorate[decoration={snake,segment length=\seglen}] { -- (\waveright,\botclip) };
%% Vector e is in the middle
%\node at (2.3,2) {$e$};
%\draw[->] (2.1,1.8) -- (2.5,1.8);
%% Vector e is on the right
\node at (6.3,1) {$e$};
\draw[->] (6.1,0.8) -- (6.5,0.8);
\node at (2.3,-0.8) {\textbf{Figure 1:} The set $U$ is on the left, the set $V+te$ is on the right.};
\node at (2.2,-1.35) {The black points are the elements of $U'$ and $V'+te$ respectively.};
\node at (0.4,-1.9) {The convex hull of $U'\cup (V'+te)$ is in gray.};
\end{tikzpicture}
\end{center}
We will use the following consequence of \Cref{lem:far_apart}.
\begin{lemma}\label{lem:geoinduct}
Suppose $U,V,W\subset \R^d$ are finite, and $e\in \R^d$ is a non-zero vector. Let $t \ge t^*(U,V)$, and $t' \ge t^*\bigl(U\cup(V+te),W\bigr)$
with $t^*$ as in \Cref{lem:far_apart}. Assume that $S\subseteq U\cup (V+te)$
and $u\in U$ satisfy
\begin{itemize}
\item the intersection $S\cap (V+te)$ is non-empty, and
\item $\overline{u}\in \convo \overline{S}$.
\end{itemize}
Then $u\in \convo (S\cup \{w\})$ for every $w\in W-t'e$.
\end{lemma}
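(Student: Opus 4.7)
Plan: The proof is a chain of two applications of \Cref{lem:far_apart}. In the first, part~\ref{part:closeiii} (in direction $e$) is used to place $u$ inside the $e$-downward Minkowski extension $(\conv(S)-e\R_+)^o$. In the second, part~\ref{part:closei} (in direction $-e$) absorbs the far-away point $w\in W-t'e$ into the convex hull itself, yielding $u\in\convo(S\cup\{w\})$.

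\textit{Step 1.} Decompose $S=S_U\cup(S_V+te)$, where $S_U\eqdef S\cap U\subseteq U$ and $S_V\subseteq V$ is the unique set satisfying $S_V+te=S\cap(V+te)$. By the first bullet of the hypothesis, $S_V\neq\emptyset$. Because the projection $\overline{\cdot}$ is orthogonal to $e$, translations along $e$ vanish after projecting, so $\overline{S}=\overline{S_U\cup S_V}$; the second bullet thus reads $\overline{u}\in\convo\overline{S_U\cup S_V}$. Since $t\geq t^*(U,V)$ and $u\in U$, part~\ref{part:closeiii} of \Cref{lem:far_apart} applied with $U'=S_U$, $V'=S_V$ gives
\[
  u\in\bigl(\conv(S_U\cup(S_V+te))-e\R_+\bigr)^o=\bigl(\conv(S)-e\R_+\bigr)^o.
\]

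\textit{Step 2.} Now apply \Cref{lem:far_apart} to the sets $\tilde U\eqdef U\cup(V+te)$ and $\tilde V\eqdef W$ in direction $-e$. Fix an arbitrary $w\in W-t'e$ and write $w=w_0-t'e$ with $w_0\in W$. Take $U'\eqdef S\subseteq\tilde U$ and $V'\eqdef\{w_0\}\subseteq\tilde V$, and observe that $u\in U\subseteq\tilde U$. The output of Step~1 is exactly $u\in(\conv(U')+(-e)\R_+)^o$, which is the hypothesis of part~\ref{part:closei} in direction $-e$; with $t'\geq t^*(\tilde U,\tilde V)$, that part concludes
\[
  u\in\convo\bigl(U'\cup(V'+t'(-e))\bigr)=\convo(S\cup\{w\}),
\]
as required.

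I do not anticipate any serious obstacle: the whole argument reduces to two careful invocations of \Cref{lem:far_apart}. The only subtlety is the implicit direction-dependence of $t^*$: since the lemma is used once with direction $e$ and once with $-e$, one should interpret $t^*(X,Y)$ as a threshold valid for both signs (e.g.\ the larger of the two values), so that both invocations are licensed by the hypotheses $t\geq t^*(U,V)$ and $t'\geq t^*(\tilde U,W)$. One must also verify the projection identity $\overline{S_V+te}=\overline{S_V}$, but this is immediate from the orthogonality of $\overline{\cdot}$ to $e$.
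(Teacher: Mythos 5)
Your proof is correct and follows precisely the same two-step route as the paper: first \Cref{lem:far_apart}\ref{part:closeiii} with $U'=S\cap U$, $V'=(S-te)\cap V$ to get $u\in(\conv S-e\R_+)^o$, then \Cref{lem:far_apart}\ref{part:closei} in direction $-e$ with $U'=S$ and $V'=\{w+t'e\}$. Your remark about the implicit direction-dependence of $t^*$ is a fair observation but not a gap, since the lemma's $t^*(U\cup(V+te),W)$ is meant relative to the direction in which it is used, namely $-e$.
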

Like the proof of \Cref{lem:far_apart}, we defer the proof of the preceding lemma to the end of the section.

We apply the construction in \Cref{lem:far_apart} repeatedly. We start with the one-element set containing the origin. At each step, we choose a direction $e$ and replace the previously
constructed set $U$ by $U\cup (U+te)$ for suitably large $t$. The directions are chosen among
the standard basis vectors as follows: for the first $m$ steps we choose $e_1$ and apply the lemma relative to $\R^1$, for the next
$m$ steps we choose $e_2$ and apply the lemma relative to $\R^2$, and so forth, ending with $m$ steps when we choose $e_d$ and work in $\R^d$. Each point of the resulting
set is of the form
\[
  P(\ba)\eqdef \sum_{\substack{i\in[d]\\j\in[m]}} a^i_j t_{i,j}e_i,
\]
where $\ba=(a^1,a^2,\dotsc,a^d)\in (\{0,1\}^m)^d$, and \[0 \ll t_{1,m}\ll t_{1,m-1}\ll \dotsb \ll t_{1,1}\ll t_{2,m}\ll t_{2,m-1}\ll \dotsb \ll t_{2,1}\ll  \dotsb \dotsb \ll t_{d,m}\ll t_{d,m-1}\ll \dotsb \ll t_{d,1}\] with the meaning
of $\ll$ being supplied iteratively by \Cref{lem:far_apart}. Note that we chose to parameterize the points so that the last entry of $a^i$ corresponds to the first step of the construction in direction $e_i$, etc. We may also assume that each next $t_{i,j}$ is at least double the preceding one. This way the order between the $i$'th coordinate values of two points $P(\ba)$ and $P(\bb)$ is determined by the lexicographic order between $a^i$ and $b^i$. Our Horton-like construction will consist
of appropriately chosen points of the form $P(\cdot)$.

\paragraph{Good sets.}
We next describe a sufficient condition on a set $Y\subseteq (\{0,1\}^m)^d$ that ensures the absence of large holes in~$P(Y)$.

We call $a\in \{0,1\}^k$ \emph{a binary sequence of length $k$} and write $k = \len a$. We denote the concatenation of sequences $a$ and $b$ by $ab$.
We write $a\prf b$ if $a$ is a prefix of $b$.
For $a\in \{0,1\}^{k}$, we denote by $\da$ the sequence of length $k-1$ obtained
from $a$ by removing the last element.

\begin{definition}\label{def:good}
We say that a set $Y \subseteq (\{0,1\}^m)^d$ is \emph{$q$-good} if every pair of distinct points $\bx, \by \in Y$ satisfies $x^i \ne y^i$ for all $i \in [d]$, and the following holds true. For every $d-1$ binary sequences $a^2,\dotsc,a^d$ (possibly of different lengths) and every $(q+1)$-element set $Z\subseteq Y$ obeying the condition
\begin{enumerate}[label=(C), ref=(C),labelindent=!,leftmargin=*,wide]
\item \label{cond:almostprefix} for each $i\in \{2,3,\dotsc,d\}$, all $\bz\in Z$ satisfy $\da^i\prf z^i$,
\end{enumerate}
there is $\by\in Y$ such that $a^i\prf y^i$ for all $i\in \{2,3,\dotsc,d\}$ and $\min \{z^1 : \bz\in Z\} < y^1 <\max\{z^1 :\bz\in Z\}$ in the lexicographic order.
\end{definition}
We shall see below that any $(T,\veps)$-almost net in base~$2$ can be turned into a \linebreak$(2^d(1+\veps)T-2(1-\veps)T+2)$-good set. In particular, since a $(t,m,d)$-net in base~$2$ is also a $(2^t,0)$-almost net in base~$2$, any $(t,m,d)$-net in base~$2$ can be turned into a $(2^{t+d} - 2^{t+1} + 2)$-good set.

\begin{definition}\label{def:holefree}
Given a finite set of points $V\subseteq \mathbb{R}^d$, we say that $V$ is \emph{$\ell$-hole-free} if for any $\ell$ points $v_1,v_2,...,v_{\ell}\in V$, there is a point $v\in V$ in the interior of $\conv\{v_1,v_2,...,v_{\ell}\}$.
\end{definition}
If the set $V$ is in general position, the definition agrees with the usual definition of a set without $\ell$-holes. The advantage
of this definition is its robustness: every sufficiently small perturbation of an $\ell$-hole-free set, which is not
necessarily in general position, is again $\ell$-hole-free.

\begin{theorem}\label{fcntogeo}
Let $d \ge 2$, $m$ and $q$ be positive integers, and suppose that $Y\subseteq (\{0,1\}^m)^d$ is $q$-good. Then the set $P(Y)$ is $(2^{d-1}q+1)$\nobreakdash-hole-free.
\end{theorem}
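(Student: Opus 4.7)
Given $\ell = 2^{d-1}q+1$ points $P(\by_1), \dotsc, P(\by_\ell) \in P(Y)$, I will construct prefixes $a^2,\dotsc,a^d$ and a $(q+1)$-subset to feed into the $q$-goodness of $Y$. Set $S_0 = \{\by_1, \dotsc, \by_\ell\}$, and for $j = 1, 2, \dotsc, d-1$ with $i := d-j+1$, let $c_i$ be the longest common prefix of $\{y^i : \by \in S_{j-1}\}$, let $b_i \in \{0,1\}$ be the more frequent value of the bit at position $|c_i|+1$, set $a^i = c_i b_i$, and restrict to $S_j = \{\by \in S_{j-1} : a^i \prf y^i\}$. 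The set $S_j$ contains at least half of $S_{j-1}$, so $|S_{d-1}| \ge \lceil \ell/2^{d-1}\rceil = q+1$. Pick any $(q+1)$-subset $Z \subseteq S_{d-1}$. Because $\da^i \prf z^i$ holds for every $\bz \in Z$ and every $i \ge 2$, $q$-goodness provides $\by \in Y$ with $a^i \prf y^i$ for $i \ge 2$ and $y^1$ lexicographically strictly between $\min_{\bz \in Z} z^1$ and $\max_{\bz \in Z} z^1$. Each piece $S_{j-1}\setminus S_j$ is nonempty, since $c_i$ was chosen as the \emph{longest} common prefix, so at scale $t_{i, k_i}$ (with $k_i := |a^i|$) there are witness points genuinely on the opposite side of $\by$ in direction $e_i$.

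\textbf{Interiority via a growing hull.} The task is to verify $P(\by) \in \convo\{P(\by_1), \dotsc, P(\by_\ell)\}$. I plan to build an ascending chain $A_1 \subseteq A_2 \subseteq \dotsb \subseteq A_d$ of subsets of $\{P(\by_1), \dotsc, P(\by_\ell)\}$, starting from $A_1 = P(Z)$ and adjoining to $A_{i-1}$ at least one point from $P(S_{d-i}\setminus S_{d-i+1})$ to form $A_i$, while at each step strengthening an interiority claim on $P(\by)$. The base case for $A_1$ uses the paper's observation that the lex order on $a^1$ matches the ordering of the first coordinate of $P(\ba)$: the lex-strict-betweenness of $y^1$ in $Z$ yields that the first coordinate of $P(\by)$ lies strictly between two first-coordinates of $A_1$. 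For the inductive step at $i$, the newly adjoined point lies at distance $\sim t_{i, k_i}$ in the $e_i$ direction from the cluster $A_{i-1} \cup \{P(\by)\}$; applying \Cref{lem:geoinduct} with $e = \pm e_i$ (sign chosen so the new point is in the $-e$ direction), and identifying the lemma's $U$, $V$, $W$ with strata of the Horton-like construction at scale $t_{i, k_i}$ and coarser, yields an interiority claim one dimension stronger. After $i=d$, we obtain $P(\by) \in \convo A_d$, completing the argument.

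\textbf{Main obstacle.} The chief technical difficulty is verifying the projection hypothesis $\overline{u} \in \convo \overline{S}$ of \Cref{lem:geoinduct}, which sits in $\R^{d-1}$ while the invariant carried by the growing hull is of lower dimension in the early stages. The remedy should be the dramatic scale hierarchy $t_{d,1} \gg t_{d,2} \gg \dotsb \gg t_{1,m}$ built into the construction (supplied by repeated application of \Cref{lem:far_apart}): at the stage processing direction $e_i$, the shared prefixes $a^{i+1}, \dotsc, a^d$ (which hold across all of $A_{i-1}\cup\{P(\by)\}$) confine that entire set to a slab of width $\ll t_{i, k_i}$ in each of directions $e_{i+1}, \dotsc, e_d$, whereas the opposite-side point being added lies at separation $\sim t_{i, k_i}$ in $e_i$. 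This separation of scales renders the missing-dimensional freedom in the invariant geometrically negligible compared to the gap being exploited, allowing the required $(d-1)$-dimensional projection hypothesis to be established from the lower-dimensional invariant.
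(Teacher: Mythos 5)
Your high-level plan is essentially the paper's: iteratively halve in directions $d,\dotsc,2$ down to a $(q+1)$-subset $Z$, apply $q$-goodness to obtain $\by$, then lift interiority one coordinate at a time using \Cref{lem:geoinduct}. But the way you define the prefixes $a^i$ leaves a genuine gap. You set $a^i=c_ib_i$, the longest common prefix of $\{y^i:\by\in S_{j-1}\}$ followed by the majority bit. The paper instead extends further: it takes the longest sequence $c^i$ such that $c_ib_ic^i$ is a common prefix of $\{y^i:\by\in S_j\}$, and then appends the \emph{flipped} bit $\beta_i=1-b_i$, i.e.\ $a^i=b^i\alpha_ic^i\beta_i$ in the paper's notation. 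This longer prefix is what makes the $\by$ returned by $q$-goodness geometrically usable. With your $a^i$, the only control on $\by$ in direction $i$ is $c_ib_i\prf y^i$; the returned $\by$ may then fall outside the convex hull in the $e_i$ direction, for instance with $y^i$ lexicographically larger than $z^i$ for every $\bz\in Z$ \emph{and} larger than $x^i$ for any witness from $S_{j-1}\setminus S_j$. In that case there is no point of $A_{i-1}\cup\{w\}$ on the far side of $\by$ along $e_i$, so $\by$ is not interior, and the hypotheses of \Cref{lem:geoinduct} (which require $u$ to lie in the middle stratum $U$, with $S\cap(V+te)\neq\emptyset$ on one side and $w\in W-t'e$ on the other) cannot be met. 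The extra $c^i\beta_i$ in the paper's $a^i$ is exactly what forces $\by$ into that middle stratum: since $c^i$ is a \emph{longest} common prefix of $S_j$, the cluster $S_j$ necessarily contains points with the bit $\alpha_i$ at the position following $c_ib_ic^i$, and those serve as the required far-side points, while the coarse-scale witness from $S_{j-1}\setminus S_j$ supplies the near side. Note that this corrected $a^i$ still satisfies condition~\ref{cond:almostprefix} with $Z\subseteq S_{d-1}$, since $\da^i=c_ib_ic^i$ is a common prefix of $S_j^i$.

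The obstacle you flag as the chief difficulty (mismatch between a $(d-1)$-dimensional projection hypothesis and a lower-dimensional invariant) does not in fact arise: the paper applies \Cref{lem:geoinduct} inside $\R^i$ via the projection $\pi_i$ onto the first $i$ coordinates, so the hypothesis needed at step $i$ is $\overline{u}\in\convo\overline{S}$ taken in $\R^{i-1}$, which matches the inductive invariant $\pi_{i-1}(P(\by))\in\convo\pi_{i-1}(P(U_{i-1}))$ exactly; no separation-of-scales argument is required for that part. Once you replace $a^i$ by $b^i\alpha_ic^i\beta_i$ and apply \Cref{lem:geoinduct} in $\R^i$ as the paper does, the remainder of your sketch goes through.
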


By the remark following \Cref{def:holefree}, we do not need
to worry about general position. So, \Cref{fcntogeo} gives us a purely combinatorial way to construct $\ell$\nobreakdash-hole\nobreakdash-free sets.

\begin{proof}[Proof of \Cref{fcntogeo}]
Let $U\subseteq Y$ be an arbitrary set of size $|U|> 2^{d-1}q$. We must show that there is a $\by\in Y$ such that $P(\by)\in\convo P(U)$.

We shall define sets $U_d\supseteq U_{d-1}\supseteq U_{d-2}\supseteq \dotsb \supseteq U_1$ and binary sequences $a^{d},a^{d-1},\ldots,a^2$ inductively. We begin by setting $U_d\eqdef U$.
Suppose $i > 1$ and $U_i$ has been defined. Denote by $U_i^i$ the set $\{x^i : \bx\in U_i\}$. Let $b^i$ be the longest
binary sequence that is a prefix of all elements of $U_i^i$, and
let $\alpha_i$ be an element of $\{0,1\}$ which maximizes the size of
\[
  U_{i-1}\eqdef \{\bx\in U_i : b^i\,\alpha_i\prf x^i \};
\]
in case of a tie, we pick $\alpha_i$ arbitrarily. Note that $\abs{U_{i-1}}\geq \abs{U_i}/2$.
Let $\beta_i\eqdef 1-\alpha_i$. We then set $c^i$ to be the longest sequence such that $b^i\,\alpha_i\, c^i$ is a prefix of all elements of $U_{i-1}^i\eqdef\{x^i : \bx\in U_{i-1}\}$ and define
\[
  a^i\eqdef b^i\,\alpha_i\,c^i\,\beta_i.
\]
It is clear that $a^i$ satisfies \ref{cond:almostprefix} for $Z = U_{i-1}$.

This way we obtain a nested sequence $U_1\subseteq U_2\subseteq \dotsb \subseteq U_d$ with $\abs{U_1}>q$. Since $Y$ is $q$-good,
and $a^2,\dotsc,a^d$ and $U_1$ satisfy condition~\ref{cond:almostprefix} in \Cref{def:good}, there exist $\by\in Y$, $\bx_{\text{small}},\bx_{\text{big}}\in U_1$ satisfying $a^i\prf y^i$ for all $i\in \{2,3,\dotsc,d\}$
as well as $x_{\text{small}}^1<y^1<x_{\text{big}}^1$ (in the lexicographic ordering). We claim that $P(\by)\in \convo P(U)$.

To prove this claim, we will show by induction on $i=1,2,\dotsc,d$ that
\[  \pi_i(P(\by))\in \convo \pi_i(P(U_i)),\]
where $\pi_i\colon \R^d\to \R^i$ is the projection map onto the first $i$ coordinates. The base case $i=1$ holds because of $x_{\text{small}}^1<y^1<x_{\text{big}}^1$. Suppose that $i>1$.
There are two (similar) cases depending on the value of $\alpha_i$. Suppose first that $\alpha_i=1$.
We apply \Cref{lem:geoinduct} in $\R^i$ using the vector $e_i$, with
\begin{align*}
\{\pi_i(P(\bx)) : b^i\,1\,c^i\, 1 \prf x^i,\ \bx\in (\{0,1\}^m)^d\}&\text{ in place of }V+te,\\
\{\pi_i(P(\bx)) : b^i\,1\,c^i\, 0 \prf x^i,\ \bx\in (\{0,1\}^m)^d\}&\text{ in place of }U,\\
\{\pi_i(P(\bx)) : b^i\, 0\,\phantom{c^i\, 1} \prf x^i,\ \bx\in (\{0,1\}^m)^d\}&\text{ in place of }W-t'e,
\end{align*}
and with $S=\pi_i(P(U_{i-1}))$, $u=\pi_i(P(\by))$, and $w=\pi_i(P(\bx))$ for some $\bx \in U_i$ such that $b^i\,0\prf x^i$ (such $\bx$ exists by the maximality of~$b^i$).
Note that $S\cap (V+te)$ is non-empty by the maximality of~$c^i$, and $\overline{u} \in \convo \overline{S}$ holds by the induction hypothesis.
Therefore we deduce from \Cref{lem:geoinduct} that $u \in \convo(S\cup \{w\}) \subseteq \convo \pi_i(P(U_i))$, as required. The case when $\alpha_i = 0$ is treated similarly
by exchanging the roles of $0$'s and $1$'s, and replacing the vector $e_i$ by $-e_i$.
\end{proof}

\paragraph{Good sets from $(T,\veps)$-almost nets.} Here we show how to transform a $(T,\veps)$-almost net $X\subseteq [0,1)^d$ of size $2^nT$ into a good set $Y \subseteq (\{0,1\}^m)^d$ for $m=n+\lceil\log_2 T\rceil+1$. Fix $i \in [d]$. For $x = (x_1,\dotsc,x_d) \in X$, let $y_i$ be the unique nonnegative integer such that $y_i \le x_i2^m < y_i + 1$. Let $\widetilde{y}_i \in \{0,1\}^m$ be the $m$-digit binary representation of $y_i$. Applying the definition of a $(T,\veps)$-almost net to the sub-boxes of the form $B=[0,1)^{i-1}\times  [\frac{b}{2^{n}}, \frac{b + 1}{2^{n}}) \times [0,1)^{d-i}$, we know that there are between $(1-\veps)T$ and $(1+\veps)T$ points $x$ in $X$ for which the corresponding $\widetilde{y}_i$ has any given prefix of length $n$.
By suitably changing, if necessary, the last $\lceil\log_2 T\rceil+1$ entries of $\widetilde{y}_i$ we obtain $y^i \in \{0,1\}^m$ so that the mapping $x \mapsto y^i$ is injective. Doing this for each $i \in [d]$, we transform every $x$ in $X$ into a $\by = (y^1,y^2,\dotsc,y^d)$ in $(\{0,1\}^m)^d$, so that the resulting set $Y \subseteq (\{0,1\}^m)^d$ satisfies the requirement in \Cref{def:good} that its elements should differ for all $i \in [d]$. Moreover, the definition of a $(T,\veps)$-almost net implies that for any $d$ binary sequences $a^1,a^2,\dotsc,a^d$ with $\sum_{i=1}^d \len a^i = k \le n$, the set \[I(a^1,a^2,\dotsc,a^d)\eqdef \{ \by\in Y : a^i\prf y^i\text{ for all }i\in [d]\}\] has size between $2^{n-k}(1-\veps)T$ and $2^{n-k}(1+\veps)T$. We call such a set $Y$ a \emph{binary $(T,\veps)$-almost net} of size $2^nT$.

The next result, together with \Cref{fcntogeo},
implies \Cref{prop:net-to-set}, which was announced in the introduction, and hence \Cref{thm:better}. The case $\veps = 0$ yields \Cref{prop:net-to-set-basic}, and hence \Cref{thm:upper-bound}.
\begin{proposition}\label{tmstofcn}
If $Y\subseteq (\{0,1\}^m)^d$ is a binary $(T,\veps)$-almost net then $Y$ is $\lfloor 2^d(1+\veps)T-2(1-\veps)T+2 \rfloor$-good.
\end{proposition}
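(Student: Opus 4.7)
The plan is to prove goodness of $Y$ by a case analysis on the ``address'' of each element of $Z$. Set $s_i=\len a^i$ for $i\ge 2$, write $a^i=\hat a^i\alpha_i$ with $\beta_i=1-\alpha_i$, and put $k=\sum_{i\ge 2}s_i$, $n'=n-k$. Condition \ref{cond:almostprefix} forces every $\bz\in Z$ to have $z^i$ begin with either $a^i=\hat a^i\alpha_i$ or $\hat a^i\beta_i$, so the $(d-1)$-tuple of $s_i$-th bits of the coordinates $z^i$ sorts $\bz$ into one of $2^{d-1}$ classes. Call the class with tuple $(\alpha_2,\ldots,\alpha_d)$ the \emph{good} class $Z_\alpha$, so that $Z_\alpha=Z\cap I(\epsilon,a^2,\ldots,a^d)$. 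The coordinate-injectivity clause of \Cref{def:good} is built into $Y$ by construction, so it only remains to produce the required $\by$.

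If $|Z_\alpha|\ge 3$, then since the first coordinates $z^1$ are distinct and at most one element attains $z_{\min}^1$ (and at most one attains $z_{\max}^1$), some $\bz\in Z_\alpha$ satisfies $z_{\min}^1<z^1<z_{\max}^1$, and this $\bz\in I(\epsilon,a^2,\ldots,a^d)$ is the required $\by$. Suppose instead $|Z_\alpha|\le 2$, so $|Z\setminus Z_\alpha|\ge q-1$. Pigeonhole over the $2^{d-1}-1$ bad classes produces one class $Z^*\subseteq I(\epsilon,u^{*,2},\ldots,u^{*,d})$, with $u^{*,i}\in\{a^i,\hat a^i\beta_i\}$ not all equal to $a^i$, satisfying $|Z^*|\ge\lceil(q-1)/(2^{d-1}-1)\rceil$. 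A direct calculation gives $(q-1)/(2^{d-1}-1)\ge 2(1+\veps)T+4\veps T/(2^{d-1}-1)$, which is strictly greater than $2(1+\veps)T$ when $\veps>0$; and when $\veps=0$ the integer $q$ equals $2^dT-2T+2$ exactly, whence $(q-1)/(2^{d-1}-1)=2T+1/(2^{d-1}-1)>2T$. In either case $|Z^*|>2(1+\veps)T\ge 2\lfloor(1+\veps)T\rfloor$, so $|Z^*|\ge 2\lfloor(1+\veps)T\rfloor+1$.

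Now partition the first-coordinate space of $I(\epsilon,u^*)$ by length-$n'$ prefix into $2^{n'}$ dyadic bins, each of cardinality at most $\lfloor(1+\veps)T\rfloor$ by the almost-net property. The preceding bound forces $Z^*$ to meet at least three such bins: if $n'\le 1$ there are at most two bins, giving $|Z^*|\le 2(1+\veps)T$, a contradiction that rules out this regime (so in particular $n'\ge 2$, keeping us inside the almost-net range where a length-$n'$ prefix plus $k$ uses at most $n$ bits). Choose three bin prefixes $c_1<c_2<c_3$ meeting $Z^*$; since $Z^*\subseteq Z$, we have $c_1\ge z_{\min}^1|_{n'}$ and $c_3\le z_{\max}^1|_{n'}$, so $c_2$ sits strictly between $z_{\min}^1|_{n'}$ and $z_{\max}^1|_{n'}$ in lexicographic order. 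The cell $I(c_2,a^2,\ldots,a^d)$ has cardinality at least $\lceil(1-\veps)T\rceil\ge 1$, and any $\by$ in it satisfies $y^1|_{n'}=c_2$ and hence $z_{\min}^1<y^1<z_{\max}^1$, as required. The main technical delicacy is the strict pigeonhole inequality $|Z^*|>2(1+\veps)T$; the tightest case $\veps=0$ relies on the integrality of $q$ yielding the surplus $+1/(2^{d-1}-1)$.
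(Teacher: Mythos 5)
Your proof is correct, but it organizes the pigeonholing differently from the paper. The paper partitions $Z$ in a single step into the cells $\hat B(a)=I(a,\da^2,\dotsc,\da^d)$ for $a\in\{0,1\}^r$ (a $2^r$-way split by length-$r$ prefix of the first coordinate, with the ``hatted'' prefixes in the other coordinates); if three cells are hit one takes any point of the middle cell's $B(a)$, and otherwise one exploits the size gap $\abs{\hat B(a)}-\abs{B(a)}\le 2^{d-1}(1+\veps)T-(1-\veps)T$ inside the heavier of the (at most two) hit cells to find two points of $Z\cap B(a)$, one of which is non-extremal. You instead do a two-stage pigeonhole: first split $Z$ by the $(d-1)$-tuple of ``critical'' bits into $Z_\alpha$ plus $2^{d-1}-1$ bad classes, handle the case $\abs{Z_\alpha}\ge 3$ directly, and in the remaining case locate a heavy bad class $Z^*$ with $\abs{Z^*}>2(1+\veps)T$, which you then subdivide by length-$n'$ first-coordinate prefixes into bins of size at most $(1+\veps)T$ each, forcing three occupied bins and letting you place $\by$ in $I(c_2,a^2,\dotsc,a^d)$ with middle prefix $c_2$. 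Both proofs ultimately invoke only the upper bound side of the almost-net condition at total length $n-(d-1)$ and the lower bound side at total length $n$, and both reach the same numerical threshold, so neither is substantially more efficient; yours has a slightly more symmetric case analysis, while the paper's avoids the explicit $2^{d-1}$-way split at the cost of the subtler ``$\hat B$ versus $B$'' counting. One small remark: your integrality discussion at the end of the pigeonhole step is not needed — the strict inequality $q-1>(2^{d-1}-1)\cdot 2(1+\veps)T$ already follows from $q=\lfloor x\rfloor>x-1$ together with $2(1-\veps)T\le 2(1+\veps)T$, whether or not $\veps=0$.
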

\begin{proof}
Suppose that the binary sequences $a^2,\dotsc,a^d$ and the set $Z\subseteq Y$ with size \linebreak$|Z| > 2^d(1+\veps)T - 2(1-\veps)T + 2$ satisfy condition~\ref{cond:almostprefix} in \Cref{def:good}.
%Note that condition~\ref{cond:extendible} is satisfied automatically, as $\{y^i : \by \in Y\}$ is the set of all $2^m$ sequences in $\{0,1\}^m$.
By condition~\ref{cond:almostprefix} we have, using the notation introduced above, $Z \subseteq I(\emptyset,\da^2,\dotsc,\da^d)$. As $|Z| > 2^{d-1}(1+\veps)T$ and
$|I(\emptyset,\da^2,\dotsc,\da^d)| \le (1+\veps)T\max(1,2^{n - \sum_{i=2}^d \len \da^i})$, we conclude that $d-1 < n - \sum_{i=2}^d \len \da^i$ and hence $\sum_{i=2}^d \len a^i=\sum_{i=2}^d \len \da^i +d-1< n$. Thus the quantity $r\eqdef\nobreak n-\sum_{i=2}^d \len a^i$ is positive. Given a sequence $a\in \{0,1\}^r$, consider the sets $B(a)\eqdef I(a,a^2,\dotsc,a^d)$ and
$\hat{B}(a)\eqdef I(a,\da^2,\dotsc,\da^d)$. From the discussion above we know that
$\abs{B(a)}\geq (1-\veps)T$ and $\abs{\hat{B}(a)}\leq 2^{d-1}(1+\veps)T$ for every $a \in \{0,1\}^r$. From condition~\ref{cond:almostprefix} we know also that $Z\subseteq \bigcup_{a\in\{0,1\}^r} \hat{B}(a)$.

Our aim is to find $\by \in Y$ that is contained in some $B(a)$ and whose first coordinate is sandwiched
between the first coordinates of two elements in $Z$.

Suppose first that $Z\cap \hat{B}(a)$ is non-empty for three (or more) distinct sequences $a\in \{0,1\}^r$, say for $a^{(1)},a^{(2)},a^{(3)}$.
We may assume that, of the three, $a^{(1)}$ is the lexicographically smallest and $a^{(3)}$ is the lexicographically
largest. Then we may pick $\by$ to be any element of $B(a^{(2)})$, for its first coordinate is
between those of elements in $Z\cap \hat{B}(a^{(1)})$ and in $Z\cap \hat{B}(a^{(3)})$.

So, we may assume that $Z$ is entirely contained in $\hat{B}(a)\cup \hat{B}(a')$ for some pair $a,a'\in \{0,1\}^r$. Then either
$\hat{B}(a)$ or $\hat{B}(a')$ contains more than $2^{d-1}(1+\veps)T-(1-\veps)T+1$ elements of $Z$. By size considerations, at least $2$ of them must be in the respective $B(\cdot)$-set,
and at most one of the $2$ is extremal in $Z$, so choosing the other one as our $\by$ works.
\end{proof}

\paragraph{Proofs of the geometrical lemmas.} It remains to prove \Cref{lem:far_apart,lem:geoinduct}.
\begin{proof}[Proof of \Cref{lem:far_apart}]
Because there are only finitely many subset pairs $(U',V')$ and points $u \in U$, it suffices to prove the assertion for any one such choice.
We may then pick the largest $t^*$ over all choices of $(U',V')$ and $u$.\smallskip

\textit{Proof of part \ref{part:closei}.}
Pick $v\in V'$ arbitrarily. Let $u\in (\conv (U')+e\R_+)^o$ be arbitrary.
Let $B(u,\veps)$, with $\veps>0$, be a closed ball around $u$ that is contained in
$\conv (U')+e\R_+$.

Assume, for contradiction's sake, that $u\notin \convo(U'\cup\{v+te\})$.
Then there is a hyperplane through $u$ such that the convex set
$\conv(U'\cup\{v+te\})$ lies entirely on one of its sides.
Pick a unit normal vector $w$ to this hyperplane, such that the halfspace $H\eqdef \{ x : \langle w,x-u\rangle > 0\}$
does not meet $\conv(U'\cup\{v+te\})$. Consider the point $\widetilde{u}\eqdef u+\veps w$, and note that $\widetilde{u}\in H$.

Since $\dist(u,\widetilde{u})= \veps$, it follows that $\widetilde{u}\in \conv(U')+e\R_+$,
and so we may write $\widetilde{u}=u_0+et_0$ with $u_0\in \conv U'$ and $t_0\in \R_+$.
Define points $p\eqdef \frac{t_0}{t_0-t}v+\frac{t}{t-t_0}u_0$
and $u'\eqdef (t_0/t)(v+te)+(1-t_0/t)u_0$.
% The near-alignment of formulas for u and u' across consecutive lines is no accident. Please try to keep it that way.
We may pick $t^*$ large enough so that
$\dist(p,u_0)<\veps$ for $t\geq t^*$. Since $\widetilde{u}=(t_0/t)(v+te)+(1-t_0/t)p$, it then
follows that $\dist(\widetilde{u},u')<\veps$, and hence $u'\in H$. Since $u'\in \conv(U'\cup\{v+te\})$,
this contradicts the definition of~$H$.\smallskip

% \textit{Proof of part \ref{part:closeii}.} Suppose $u\notin \conv U'+e\R_+$. Let $w$ be a direction
% such that all $x\in \conv U'+e\R$ satisfy $\langle w,x-u\rangle > 0$. Because $U'+e\R_+$ contains
% rays in the direction $e$, it follows that
% \begin{equation}\label{eq:sepdir}
% \langle w,e\rangle\geq 0.
% \end{equation} Since $U'$ is finite, there
% exists a sufficiently small $\veps>0$ such that
% the halfspace $H\eqdef \{x:\langle w+\veps e,x-u\rangle>0\}$ contains $U'$.
% This implies that $\conv U'\subset H$, and
% because of \eqref{eq:sepdir} also $\conv U'+e\R_+\subset H$.
% In addition, since $V'$ is finite, we may choose $t^*$ sufficiently large
% so that $V'+te\subset H$ whenever $t\geq t^*$. So, $\conv\bigl(U'\cup(V'+te)\bigr)\subset H$.
% Since $u\notin H$, this means that $u\notin \conv\bigl(U'\cup(V'+te)\bigr)$.

\textit{Proof of part \ref{part:closeiii}.} We first note that it suffices to show that
$u\in \conv \bigl(U'\cup (V'+te)\bigr)-e\R_+$, for we may then apply this to all points in
a sufficiently small neighborhood of $u$ to conclude that in fact $u\in \bigl(\conv \bigl(U'\cup (V'+te)\bigr)-e\R_+\bigr)^o$.

As $\overline{u}$ is in the interior of $\conv \overline{U'\cup V'}$,
we may write it as a convex combination, in which the coefficients of every point in $\overline{U'}$ and of every
point in $\overline{V'}$ are non-zero. Indeed, for sufficiently small $\veps>0$, the point
$\overline{u}_{\veps}\eqdef (1+\veps)\overline{u}-\frac{\veps}{\abs{U'}+\abs{V'}}\sum_{u'\in U'}\overline{u\raisebox{-2pt}{$'$}}-
\frac{\veps}{\abs{U'}+\abs{V'}}\sum_{v'\in U'}\overline{v\raisebox{-2pt}{$'$}}$
is in $\conv \overline{U'\cup V'}$. Writing $\overline{u}_{\veps}$ as a convex combination
of the points in $\overline{U'\cup V'}$, and rearranging, we obtain
an expression for $\overline{u}$ as a convex combination with strictly positive coefficients.

Fix such a convex combination, say $\overline{u}=\sum_{u'\in U'} \alpha_{u'} \overline{u\raisebox{-2pt}{$'$}}+\sum_{v'\in V'} \beta_{v'} \overline{v\raisebox{-2pt}{$'$}}$.
Since the $\beta$'s are positive, we may choose $t^*$ large enough so that for $t \ge t^*$, the convex combination
$\sum_{u'\in U'} \alpha_{u'} u'+\sum_{v'\in V'} \beta_{v'} (v'+te)$ is above $u$ in the direction~$e$, and so
$u\in \conv \bigl(U'\cup (V'+te)\bigr)-e\R_+$.
\end{proof}
\begin{proof}[Proof of \Cref{lem:geoinduct}]
From \Cref{lem:far_apart}\ref{part:closeiii} applied to the sets $U'=S\cap U$ and $V'=(S-te)\cap V$,
using $\overline{u}\in \convo \overline{S}$ we deduce that $u\in(\conv S-e\R_+)^o$ (note that $V'\neq \emptyset$ because $S\cap (V+te)$ is non-empty).
Then, from \Cref{lem:far_apart}\ref{part:closei} applied to the
sets $U\cup (V+te)$ and $W$ in place of $U$ and $V$, the direction $-e$ in place of $e$, with $U'=S$ and $V'=\{w+t'e\}$,
we obtain the desired conclusion.
\end{proof}

\section{Problems and remarks}
\begin{itemize}
\item We suspect that every large enough set in general position in $\R^d$ contains an exponentially large hole. However,
we were unable to improve Valtr's bound $h(d)\geq 2d+1$. We do have an argument (details omitted) showing that constructions along the
  lines of Horton's, Valtr's and ours cannot avoid exponentially large holes:
  if a set in $\R^d$ consists of points of the form $P(\ba)$ and is large enough,
  then it contains a hole of size $2^d$.

\item Let $f_{d,\ell}(n)$ be the least number of $\ell$-holes in an $n$-point set in general position in $\R^d$.
It is possible to give lower bounds on $f_{d,\ell}(n)$. First, for $\ell\leq h(d)$, we may cut the $n$-point set
into linearly-many equally large pieces by parallel hyperplanes. If each piece is large enough, then it contains
an $\ell$-hole, and so $f_{d,\ell}(n)=\Omega(n)$ in this case.

Second, $f_{d+1,\ell+1}(n+1)\geq \frac{n+1}{\ell+1}\cdot f_{d,\ell}(\lceil n/2 \rceil)$ holds. Indeed, suppose $P\subset \R^{d+1}$ is in general position
and $p\in P$ is arbitrary. Pick any hyperplane that passes only through $p$, and push it slightly towards
the side containing more points of~$P$. Consider the central projection towards $p$ to the hyperplane of points
on this larger side; we may think of it as a set in $\R^d$. Every $\ell$-hole in this set entails an $(\ell+1)$-hole in~$P$.
As an $(\ell+1)$-hole arises in this manner at most $\ell+1$ times, the bound follows.

Taken together with the known lower bounds on $f_{2,\ell}(n)$ and with the aforementioned bound of Valtr, these two observations yield $f_{d,\ell}(n)=\Omega(n^d)$ for $\ell=d+1,d+2$,
$f_{d,d+3}(n)=\Omega(n^{d-1}\log^{4/5}n)$, $f_{d,d+4}(n)=\Omega(n^{d-1})$, and $f_{d,d+k}(n)=\Omega(n^{d-k+2})$ for $k=5,\dotsc,d+1$.

\item It would be interesting to characterize large sets that contain no holes of some fixed size. In this connection we conjecture that,
for each $n,\ell\in \N$, every sufficiently large $\ell$-hole-free set in general position in $\R^2$ contains an $n$-point subset
whose order type is the same as that of an $n$-point Horton set.
\end{itemize}

\bibliographystyle{alpha}

\bibliography{holes}
\end{document}